\newtheorem{theorem}{Theorem}[section]
\newtheorem{lemma}[theorem]{Lemma}
\theoremstyle{definition}
\newtheorem{definition}[theorem]{Definition}
\newtheorem{remark}[theorem]{Remark}
\definecolor{darkblue}{rgb}{0,0,0.6}
\newcommand{\R}{\mathbb{R}}
\newcommand{\N}{\mathbb{N}}
\newcommand{\G}{G^{(n)}(x,y)}
\renewcommand{\div}{\operatorname{div}}
\DeclareMathOperator{\curl}{curl}
\DeclareMathOperator{\D}{D}
\DeclareMathOperator{\Div}{div}
\DeclareMathOperator{\inc}{\boldsymbol{\operatorname{inc}}}
\newcommand{\skalarProd}[2]{\left\langle#1,#2\right\rangle}
\newcommand{\norm}[1]{\left\lVert#1\right\rVert}
\newcommand{\Sym}[1]{\operatorname{Sym}\!\left(#1\right)}
\newcommand{\so}[1]{\mathfrak{so}\!\left(#1\right)}
\newcommand{\Anti}[1]{\operatorname{Anti}\!\left(#1\right)}
\newcommand{\axl}{\operatorname{axl}}
\newcommand{\ntimes}[1]{\times_{#1}}
\DeclareMathOperator{\Curl}{curl}
\newcommand{\nCurl}[1]{\Curl_{#1}}
\newcommand{\ncurl}[1]{\operatorname{curl}_{#1}\hspace{-1pt}}
\newcommand{\A}{\mathfrak{A}}
\renewcommand{\a}{\mathfrak{a}}
\newcommand{\nMat}[2]{\left\llbracket#1\right\rrbracket_{\ntimes{#2}}}
\newcommand{\sym}{\operatorname{sym}}
\renewcommand{\skew}{\operatorname{skew}}
\newcommand{\tr}{\operatorname{tr}}
\newcommand{\bbone}{\text{\usefont{U}{bbold}{m}{n}1}}
\newcommand{\id}[1]{I_{#1}}
\renewcommand{\P}{\mathfrak{P}}
\title{Matrix representation of a cross product and related curl-based differential operators in all space dimensions}
\author{Peter Lewintan\thanks{Faculty of Mathematics, University of Duisburg-Essen, Thea-Leymann-Str. 9,
45127 Essen, Germany
  (\href{mailto:peter.lewintan@uni-due.de}{peter.lewintan@uni-due.de}, \url{udue.de/lew}).}
}
\begin{document}
\maketitle
 
\begin{tikzpicture}[remember picture, overlay]
 \node [xshift=-1cm,yshift=15cm,rotate=-90] at (current page.south east)
 {The final publication appeared in Open Mathematics (2021), doi: \href{https://doi.org/10.1515/math-2021-0115}{10.1515/math-2021-0115}.
 };
\end{tikzpicture}
\numberwithin{equation}{section}

\begin{abstract}
A higher dimensional generalization of the cross product is associated with an adequate matrix multiplication. This index-free view allows for a better understanding of the underlying algebraic structures, among which are generalizations of Grassmann's, Jacobi's and Room's identities. Moreover, such a view provides a higher dimensional analogue of the decomposition of the vector Laplacian which itself gives an explicit index-free Helmholtz decomposition in arbitrary dimensions $n\ge2$.
\end{abstract}

\par\noindent\textbf{AMS 2020 subject classification:} 15A24, 15A69, 47A06, 35J05.\par

\par\noindent
\textbf{Keywords:}
Generalized cross product, Jacobi identity, generalized curl, matrix representation, vector Laplacian, Helmholtz decomposition, div-curl lemma.\par

\section{Introduction}
The interplay between different differential operators is at the basis not only of pure analysis but also of many applied mathematical considerations. One possibility is to study, instead of the properties of a linear homogeneous differential operator with constant coefficients 
\begin{subequations}
\begin{equation}
 \mathcal{A}=\sum_{|\boldsymbol{\alpha}|=k}A_{\boldsymbol\alpha}\,\nabla^{\boldsymbol\alpha}
\end{equation}
where $\boldsymbol\alpha=(\alpha_1,\ldots,\alpha_n)^T\in\N_0^n$ is a multi-index of length $|\boldsymbol\alpha|\coloneqq \alpha_1+\ldots+\alpha_n$,\linebreak $\nabla^{\boldsymbol\alpha}\coloneqq\partial_1^{\alpha_1}\ldots\partial_n^{\alpha_n}$ and $A_{\boldsymbol\alpha}\in\R^{N\times m}$, its symbol
\begin{equation}
 \mathbb{A}(b)=\sum_{|\boldsymbol{\alpha}|=k}A_{\boldsymbol\alpha}\,b^{\boldsymbol\alpha} \in \R^{N\times m},
\end{equation}
\end{subequations}
where we use the notation $b^{\boldsymbol\alpha}=b_1^{\alpha_1}\cdot\ldots\cdot b_n^{\alpha_n}$ for $b\in\R^n$. Note that $\mathcal{A}:C^\infty_c(\Omega,\R^m)\to C^\infty_c(\Omega,\R^N)$ with $\Omega\subseteq\R^n$ open and we obtain for all $a\in C^\infty_c(\Omega,\R^m)$ also the expression $\mathcal{A}\,a = \EuScript{A}(\D a)$ with $\EuScript{A}\in\operatorname{Lin}(\R^{m\times n},\R^N)$. The approach to look and algebraically operate with the vector differential operator $\nabla$ 
in a manner of a vector is also referred to as vector calculus or formal calculations.

An example of such a differential operator is the derivative $\D$ itself, but also $\div$, $\curl$, $\Delta$ or $\inc$. One of the most prominent relation in vector calculus is ~$\curl \nabla\zeta\equiv0$~ for scalar fields $\zeta\in C^\infty_c(\Omega)$, $\Omega\subseteq\R^3$ open, which, from an algebraic point of view, reads $b \times b = 0$ for all $b\in\R^3$ (where a scalar factor can be and is omitted).

In this paper we take a closer look at a higher dimensional analogue of the curl or rather the underlying generalized cross product. An extension of the usual cross product of vectors in $\R^3$ to vectors in $\R^n$  depends on which properties are to be fulfilled. The three basic properties of the vector product are: the linearity in both arguments, that the vector $a\times b$ is perpendicular to both $a,b\in\R^3$ (and, thus belongs to the \textit{same} space) and that its length is the area of the parallelogram spanned by $a$ and $b$. Gibbs uses these properties also to define the cross product, see \cite[Chapter II]{Gibbs1881Elements}. It turns out that such a vector product exists only in three and seven dimensions, cf.~\cite{Massey1983}. However, the $7$-dimensional vector product does not satisfy Jacobi's identity but rather a generalization of it, namely the \textit{Malcev identity}, cf.~\cite[p.~279]{Ebbinghaus-Numbers} and the references contained at the end of the section therein. We do not follow these constructions here and instead generalize the cross product to all dimensions by omitting one of its basic properties. These considerations are usually carried out using coordinates, i.e.~index notations. However, we are concerned with their matrix representation, which provides a better understanding of the underlying algebraic structures. Such a view has already proved very useful in extending Korn inequalities for incompatible tensor fields to higher dimensions, cf.~\cite{agn_lewintan2020KornLpN_tracefree}, where first results in these matrix representations have been obtained. In the present paper, we catch up with the underlying algebraic structures, among which are generalizations of Grassmann's, Jacobi's and Room's identities. Moreover, such a view provides a higher dimensional analogue of the decomposition of the vector Laplacian which itself gives an explicit index-free Helmholtz decomposition in arbitrary dimensions $n\ge2$.

\section{Notations}
As usual, $.\otimes.$ and $\skalarProd{.}{.}$ denote the dyadic and the scalar product, respectively. We write $.\cdot.$ to highlight the scalar multiplication of a scalar with a vector or a matrix.
The space of symmetric $(n\times n)$-matrices is denoted by $\Sym{n}$ and the space of skew-symmetric  $(n\times n)$-matrices by $\so{n}$. We use lower-case Greek letters to denote scalars, lower-case Latin letters to denote column vectors and upper-case Latin letters to denote matrices, with the exceptions for the dimensions: if not otherwise stated we have $n,m,N\in\N$ and $n\ge2$. The identity matrix is denoted by $\id{n}$. For the symmetric part, the skew-symmetric part and the transpose of a matrix $P$ we write $\sym P$, $\skew P$ and $P^T$, respectively.

\section{Algebraic view of a generalized cross product}
\subsection{Inductive introduction}
From an algebraic point of view the components of the usual cross product $a\times b$ are of the form $\alpha_i\beta_j-\alpha_j\beta_i$ for $1\le i<j\le 3$ sorted (and multiplied with $-1$) in such a way that the resulting vector is perpendicular to both $a$ and $b$. For a general $n\in\N$ we have $\frac{n(n-1)}{2}$ combinations of the form $\alpha_i\beta_j-\alpha_j\beta_i$ with $1\le i<j\le n$ and we array them as the column vector
\begin{equation}\label{eq:crossCoords}
\begin{pmatrix}
  \alpha_1\,\beta_2-\alpha_2\,\beta_1\\
  \alpha_1\,\beta_3-\alpha_3\,\beta_1\\
  \alpha_2\,\beta_3-\alpha_3\,\beta_2\\
  \alpha_1\,\beta_4-\alpha_4\,\beta_1\\
  \alpha_2\,\beta_4-\alpha_4\,\beta_2\\
  \alpha_3\,\beta_4-\alpha_4\,\beta_3\\
  \vdots
 \end{pmatrix}
 \qquad \text{for $a=(\alpha_i)_{i=1,\ldots,n}, b=(\beta_i)_{i=1,\ldots,n}\in\R^n$.}
\end{equation}
This becomes a vector from $\R^{\frac{n(n-1)}{2}}$ and only for $n=3$ lies in the same space as the vectors $a$ and $b$. More precisely, using the notation
\begin{equation}
 b =(\overline{b},\beta_n)^T\in\R^n \quad \text{with $\overline{b}\in\R^{n-1}$}
\end{equation}
we introduce the following generalized cross product $\ntimes{n}:\R^n\times\R^n\to\R^{\frac{n(n-1)}{2}}$ inductively by
\begin{equation}\label{eq:iductivecross}
 a\ntimes{n}b \coloneqq \begin{pmatrix} \overline{a}\ntimes{n-1}\overline{b} \\[1ex]
                \beta_n\cdot\overline{a}-\alpha_n\cdot\overline{b}
                \end{pmatrix}\in\R^{\frac{n(n-1)}{2}}
                \quad \text{where}\quad
\begin{pmatrix}\alpha_1\\ \alpha_2 \end{pmatrix}\ntimes{2}\begin{pmatrix}\beta_1\\\beta_2 \end{pmatrix}
\coloneqq \alpha_1\,\beta_2-\alpha_2\,\beta_1,
\end{equation}
wherefrom the bilinearity and anti-commutativity follow immediately. We show in section \ref{sec:Lagrange} that this generalized cross product $\ntimes{n}$ also satisfies the area property:
\begin{equation*}
 \norm{a\ntimes{n}b}^2=\norm{a}^2\norm{b}^2-\skalarProd{a}{b}^2 \quad \forall\ a,b\in\R^n.
\end{equation*}

\begin{remark}
 The anti-commutativity of the (usual or generalized) cross product is a consequence of the area property. Indeed, let $n,d\in\N$, $n\ge2$ and $\,\underline{\times}\,:\R^n\times \R^n \to \R^d$ be a bilinear map which satisfies the area property
 \begin{equation}\label{eq:areaGen}
  \norm{a \,\underline{\times}\,b }^2 = \norm{a}^2\norm{b}^2-\skalarProd{a}{b}^2 \qquad \forall\ a,b\in\R^n.
 \end{equation}
Then for $a=b$ we obtain:
\begin{equation}
 \norm{b\,\underline{\times}\,b}^2 = \norm{b}^2\norm{b}^2-\skalarProd{b}{b}^2=0 \quad \Rightarrow \quad b\,\underline{\times}\,b = 0 \qquad \forall \ b \in\R^n.
\end{equation}
Linearizing the last equality leads to
\begin{align}
 0=(a+b)\,\underline{\times}\,(a+b)=a\,\underline{\times}\,a+a\,\underline{\times}\,b+b\,\underline{\times}\,a+b\,\underline{\times}\,b \quad \Leftrightarrow \quad a\,\underline{\times}\,b = - b\,\underline{\times}\,a \quad \forall \ a,b\in\R^n.
\end{align}
Furthermore, in case $d=n$, we call $\,\underline{\times}\,$ a \emph{vector product} to emphasize  that the vector $a\,\underline{\times}\,b$ is in the \textit{same} space as $a$ and $b$. In this situation we can further talk about orthogonality of the vector $a\,\underline{\times}\,b$ to both $a$ and $b$. Massey \cite{Massey1983} showed that assuming these three properties, i.e.~bilinearity, area property and orthogonality, a vector product exists only in the dimension $n=3$ or $n=7$. However, there are many cross products, different from each other, depending on the properties one requires to hold. In the present paper, we drop the orthogonality condition since we consider the case $d=\frac{n(n-1)}{2}$ and introduce in \eqref{eq:iductivecross} the generalized cross product by induction over the space dimension $n$. This is equivalent to the coordinate-wise expression from \eqref{eq:crossCoords} but allows for a better understanding of the algebraic properties of the generalized cross product $\ntimes{n}$.
\end{remark}

\subsection{Relation to skew-symmetric matrices}
To establish the connection of the generalized cross product $a\ntimes{n}b$ to the entries of $\skew(a\otimes b)$ we start with the following bijection $\a_n: \so{n}\to\R^{\frac{n(n-1)}{2}}$ given by
 \begin{subequations}
 \begin{equation}
  \a_n (A)\coloneqq(\alpha_{12},\alpha_{13},\alpha_{23},\ldots,\alpha_{1n},\ldots,\alpha_{(n-1)n})^T \quad 
 \end{equation}
for $A=(\alpha_{ij})_{i,j=1,\ldots,n}\in\so{n}$, as well as its inverse $\A_n:\R^{\frac{n(n-1)}{2}}\to\so{n}$, so that 
\begin{equation}
\begin{split}
  \A_n(\a_n (A)) &= A  \quad \forall\ A\in\so{n} \quad \text{and} \\
  \a_n(\A_n (\a)) &= \a  \quad \forall\ \a\in\R^{\frac{n(n-1)}{2}}
\end{split}
\end{equation}
\end{subequations}
and, for $\a=\left(\alpha_1,\ldots,\alpha_{\frac{n(n-1)}{2}}\right)^T\in\R^{\frac{n(n-1)}{2}}$, in coordinates it looks like 
\begin{equation}
 \A_n(\a)=
 \begin{pmatrix}
 0 & \alpha_1 & \alpha_2 & \alpha_4 & \vdots \\
-\alpha_1 & 0 & \alpha_3 & \alpha_5 & \vdots\\ 
-\alpha_2 & -\alpha_3 & 0 & \alpha_6& \vdots\\
-\alpha_4 & -\alpha_5& -\alpha_6 & 0 & \vdots\\
\cdots & \cdots & \cdots & \cdots & 0
\end{pmatrix}\,.
\end{equation}
Thus, the generalized cross product $a\ntimes{n}b$ can be written as
\begin{subequations}\label{eq:crossandskew}
\begin{align}
 a\ntimes{n}b &=\a_n(a\otimes b -b\otimes a)=2\cdot\a_n(\skew(a\otimes b)),
 \intertext{or, equivalently,}
 \A_n(a\ntimes{n}b) &= a\otimes b-b\otimes a  \qquad \text{is true for all $a,b\in\R^n$}.
\end{align}
\end{subequations}

\subsection{Lagrange identity}\label{sec:Lagrange}
In three dimensions, Lagrange's identity reads in terms of the usual cross product and the scalar product
\begin{equation}\label{eq:Lagrange3}
 \skalarProd{a\times b}{c\times d}=\skalarProd{a}{c}\skalarProd{b}{d}-\skalarProd{a}{d}\skalarProd{b}{c} \qquad \forall \ a,b,c,d\in\R^3
 \end{equation}
and for $c=a$ and $d=b$ becomes
\begin{equation}
 \norm{a\times b}^2=\norm{a}^2\norm{b}^2- \skalarProd{a}{b}^2 \qquad \forall \ a,b\in\R^3
\end{equation}
meaning that the length of the vector $a\times b \in\R^3$ is equal to the area of the parallelogram spanned by the vectors $a,b\in\R^3$.
 
In higher dimensions, the inductive definition \eqref{eq:iductivecross} can be used to directly deduce an analogue to \textit{Lagrange's identity}, namely:
\begin{equation}\label{eq:LagrangeN}
 \skalarProd{a\ntimes{n}b}{c\ntimes{n}d}=\skalarProd{a}{c}\skalarProd{b}{d}-\skalarProd{a}{d}\skalarProd{b}{c} \qquad \forall \ a,b,c,d\in\R^n.
\end{equation}
Indeed, in the dimension $n=2$ we have
\begin{align}
 \skalarProd{\begin{pmatrix}\alpha_1\\\alpha_2 \end{pmatrix}\ntimes{2}\begin{pmatrix}\beta_1\\\beta_2 \end{pmatrix}}{\begin{pmatrix}\gamma_1\\\gamma_2 \end{pmatrix}\ntimes{2}\begin{pmatrix}\delta_1\\\delta_2 \end{pmatrix}} 
  &= (\alpha_1\,\beta_2-\alpha_2\,\beta_1)(\gamma_1\,\delta_2-\gamma_2\,\delta_1) \notag\\
 & = \alpha_1\,\beta_2\,\gamma_1\,\delta_2+\alpha_2\,\beta_1\,\gamma_2\,\delta_1 
     -\alpha_1\,\beta_2\,\gamma_2\,\delta_1-\alpha_2\,\beta_1\,\gamma_1\,\delta_2 \notag\\
 & = (\alpha_1\,\gamma_1+\alpha_2\,\gamma_2)(\beta_1\,\delta_1+\beta_2\,\delta_2)-(\alpha_1\,\delta_1+\alpha_2\,\delta_2)(\beta_1\,\gamma_1+\beta_2\,\gamma_2)\notag\\
 & \hspace*{-3em}= \skalarProd{\begin{pmatrix}\alpha_1\\\alpha_2 \end{pmatrix}}{\begin{pmatrix}\gamma_1\\\gamma_2 \end{pmatrix}}\skalarProd{\begin{pmatrix}\beta_1\\\beta_2 \end{pmatrix}}{\begin{pmatrix}\delta_1\\\delta_2 \end{pmatrix}}-\skalarProd{\begin{pmatrix}\alpha_1\\\alpha_2 \end{pmatrix}}{\begin{pmatrix}\delta_1\\\delta_2 \end{pmatrix}}\skalarProd{\begin{pmatrix}\beta_1\\\beta_2  \end{pmatrix}}{\begin{pmatrix}\gamma_1\\\gamma_2 \end{pmatrix}}.\notag
\end{align}
Furthermore, with $a=(\overline{a},\alpha_n)^T, b=(\overline{b},\beta_n)^T,c=(\overline{c},\gamma_n)^T,d=(\overline{d},\delta_n)^T$ we obtain on the one hand
\begin{align}
  \skalarProd{a\ntimes{n}b}{c\ntimes{n}d}&=
  \skalarProd{\begin{pmatrix} \overline{a}\ntimes{n-1}\overline{b} \\[1ex]
                \beta_n\cdot\overline{a}-\alpha_n\cdot\overline{b}
                \end{pmatrix}}{\begin{pmatrix} \overline{c}\ntimes{n-1}\overline{d} \\[1ex]
                \delta_n\cdot\overline{c}-\gamma_n\cdot\overline{d}
                \end{pmatrix}}\notag\\
            & = \skalarProd{\overline{a}\ntimes{n-1}\overline{b}}{\overline{c}\ntimes{n-1}\overline{d}}+\beta_n\,\delta_n\skalarProd{\overline{a}}{\overline{c}}+\alpha_n\,\gamma_n\skalarProd{\overline{b}}{\overline{d}}-\beta_n\,\gamma_n\skalarProd{\overline{a}}{\overline{d}}-\alpha_n\,\delta_n\skalarProd{\overline{b}}{\overline{c}},\notag
\end{align}
and on the other hand:
\begin{align}
 \skalarProd{a}{c}\skalarProd{b}{d}-\skalarProd{a}{d}\skalarProd{b}{c}
 &= (\skalarProd{\overline{a}}{\overline{c}}+\alpha_n\,\gamma_n)
 (\skalarProd{\overline{b}}{\overline{d}}+\beta_n\,\delta_n)
 -
 (\skalarProd{\overline{a}}{\overline{d}}+\alpha_n\,\delta_n)
 (\skalarProd{\overline{b}}{\overline{c}}+\beta_n\,\gamma_n) \notag\\
 & = \skalarProd{\overline{a}}{\overline{c}}\skalarProd{\overline{b}}{\overline{d}}-\skalarProd{\overline{a}}{\overline{d}}\skalarProd{\overline{b}}{\overline{c}}
 +\beta_n\,\delta_n\skalarProd{\overline{a}}{\overline{c}}+\alpha_n\,\gamma_n\skalarProd{\overline{b}}{\overline{d}}-\beta_n\,\gamma_n\skalarProd{\overline{a}}{\overline{d}}-\alpha_n\,\delta_n\skalarProd{\overline{b}}{\overline{c}},\notag
\end{align}
so that \eqref{eq:LagrangeN} follows by induction over $n\in\N$ , $n\ge2$. Especially for $c=a$ and $d=b$ we obtain for the squared norm of the generalized cross product
\begin{equation}\label{eq:parallelogramN}
 \norm{a\ntimes{n}b}^2\overset{\eqref{eq:LagrangeN}}{=}\norm{a}^2\norm{b}^2-\skalarProd{a}{b}^2 \quad \forall\ a,b\in\R^n
\end{equation}
meaning that the length of the vector $a\ntimes{n}b\in\R^{\frac{n(n-1)}{2}}$ is equal to the area of the parallelogram spanned by the vectors $a,b\in\R^n$.

Two (non-zero) vectors $a,b\in\R^n$ are linearly dependent (and thus \emph{parallel}) if and only if $a\ntimes{n} b = 0$.

\subsection{Matrix representation}
It is well known that an identification of the usual cross product $\times$ with an adequate matrix multiplication facilitates some of the common proofs in vector algebra and allows one to extend the cross product of vectors to a cross product of a vector and a matrix, cf.~\cite{Room1952vectprod,GTT1999vectprodinC,Trenkler2001vectprod,TT2008vectorcross,agn_lewintan2020generalKorn}.

Our next goal is to achieve a similar identification of the generalized cross product $\ntimes{n}$ with a corresponding matrix multiplication. Indeed, since for a fixed $a\in\R^n$ the operation $a\ntimes{n}.$ is linear in the second component there exists a unique matrix denoted by $\nMat{a}{n}\in\R^{\frac{n(n-1)}{2}\times n}$ such that
\begin{equation}
 a\ntimes{n}b\eqqcolon \nMat{a}{n}b\qquad \forall \ b\in\R^n.
\end{equation}
In view of \eqref{eq:iductivecross} the matrices $\nMat{.}{n}$ can be characterized inductively, and for $a=(\overline{a},\alpha_n)^T$ the matrix $\nMat{a}{n}$ has the form
 \begin{equation}\label{eq:crossNmat}
  \nMat{a}{n}= \begin{pNiceArray}{CC:C}
  \Block{2-2}<\large>{\nMat{\overline{a}}{n-1}} & & \Block{2-1}{0} \\
  \hspace*{2em}&\hspace*{2em} & \hspace*{2em}  \\
  \Hdotsfor{3}\\
   \Block{2-2}{-\alpha_n\cdot\id{n-1}} & &  \Block{2-1}{\overline{a}}\\
   &&
   \end{pNiceArray}
\qquad \text{where}\quad \nMat{\begin{pmatrix}\alpha_1\\ \alpha_2 \end{pmatrix}}{2}= \begin{pNiceArray}{CC}-\alpha_2, & \alpha_1\end{pNiceArray},
 \end{equation}
so that 
\begin{equation}\label{eq:genMat3}
 \nMat{\begin{pmatrix}\alpha_1\\\alpha_2\\\alpha_3 \end{pmatrix}}{3}= 
 \begin{pNiceArray}{CC:C}
  -\alpha_2 & \alpha_1 & 0 \\
  \Hdotsfor{3}\\
  -\alpha_3 & 0& \alpha_1 \\
  0 & -\alpha_3 & \alpha_2
 \end{pNiceArray}
 \quad \text{etc.}
  \end{equation}
 \begin{remark}
 The entries of the generalized cross product $a\times_3 b$, with $a,b\in\R^3$, are permutations (with a sign) of the entries of the classical cross product $a\times b$. Remember that the operation $a\times.$ can be identified with the left multiplication by the following skew-symmetric matrix
 \begin{align}
  \operatorname{Anti}(a)&=\begin{pmatrix} 0 & -\alpha_3 & \alpha_2 \\ \alpha_3 & 0 & -\alpha_1 \\ -\alpha_2 & \alpha_1 & 0 \end{pmatrix}
\intertext{which differs from the expression $\nMat{a}{3}$ for $a=(\alpha_1, \alpha_2,\alpha_3)^T$, cf.~\eqref{eq:genMat3}, and also form $\A_3(a)$ which reads}
\A_3(a)&=\begin{pmatrix} 0 & \alpha_1 & \alpha_2 \\ -\alpha_1 & 0 & \alpha_3 \\ -\alpha_2 & -\alpha_3 & 0 \end{pmatrix}\,.
\end{align}
Thus, in three dimensions, it holds for the usual cross product
\begin{equation}
 a \times b = \Anti{a}b \qquad \forall\ a,b\in\R^3.
 \end{equation}
Also the notations $T_a$ , $W(a)$ or even $[a]_{\times}$ are used for $\Anti{a}$, however, the latter emphasizes that we deal with a skew-symmetric matrix. For the analysis and the properties of such matrices we refer to \cite{Room1952vectprod,GTT1999vectprodinC,Trenkler2001vectprod,TT2008vectorcross,agn_lewintan2020generalKorn}.
\end{remark}

\begin{remark}
Also the $7$-dimensional vector product $a\times.$ for $a\in\R^7$ (which differs from $a\ntimes{7}.$) can be represented with a left multiplication by a skew-symmetric matrix from $\so7$, see \cite{Leite1993cross7,LeiteCrouch1997cross7,BNAV2017cross7,BNSV2018vectorcross}.
\end{remark}

\subsection{Scalar triple product}
In case of the usual cross product in three dimensions the scalar triple product remains unchanged under a circular shift of the three vectors (from the same space):
\begin{equation}
 \skalarProd{a}{b\times c} = \skalarProd{a}{\Anti{b}c} = -\skalarProd{\Anti{b}a}{c}= \skalarProd{a\times b}{c}, \quad \forall\ a,b,c\in\R^3.
\end{equation}
Since $\ntimes{n}: \R^n\times \R^n\to \R^{\frac{n(n-1)}{2}}$ it does not make sense to think of an analogue of a scalar triple product with three vectors coming from the same vector space but rather instead:
\begin{subequations}\label{eq:scalartripleN}
\begin{align} 
 \skalarProd{\a}{b\ntimes{n}c} &= \skalarProd{\a}{\nMat{b}{n}c}=\skalarProd{\nMat{b}{n}^T\a}{c}
 \quad \forall\ \a\in\R^{\frac{n(n-1)}{2}}, \ b,c\in\R^n,
 \intertext{so that with $c=b$ we have:}\label{eq:scalartripleNbb}
 \skalarProd{\nMat{b}{n}^T\a}{b} &= 0 \quad \forall\ \a\in\R^{\frac{n(n-1)}{2}}, \ b\in\R^n.
\end{align}
\end{subequations}
Note the slight difference from the case of the usual cross product. The latter can be represented by a left multiplication with a square skew-symmetric matrix, whereas for the generalized cross product by matrices of the form \eqref{eq:crossNmat} which are neither square (except the case $n=3$) nor skew-symmetric matrices. These matrices $\nMat{.}{n}$ also appear in further identities involving the generalized cross product and are very important in the subsequent considerations.

\subsection{Grassmann identity} 
In three dimensions, the usual vector triple product fulfills
\begin{align}\label{eq:Grassmann3}
 a \times (b\times c)& =  \Anti{a}(b\times c) = \Anti{a}\Anti{b}c = -(b\times c) \times a = -\Anti{b\times c}a \notag \\
 &\overset{*}{=} \skalarProd{a}{c}\cdot b - \skalarProd{a}{b}\cdot c \qquad \forall \ a,b,c\in\R^3,
\end{align}
where the relation to scalar products, marked by $*$, is referred to as  \emph{Grassmann identity}.

However, in a generalization of a vector triple product we cannot expect the double appearance of the generalized cross product but focus on the matrices $\nMat{.}{n}$, as in the generalization of the scalar triple. Thus, as a generalization of \emph{Grassmann's identity}  we obtain for $a,b,c\in\R^n$ 
 \begin{align}\label{eq:GrassmannN}
  \nMat{a}{n}^T(b\ntimes{n}c)&=\skalarProd{a}{b}\cdot c-\skalarProd{a}{c}\cdot b = (\skalarProd{b}{a}\cdot\id{n}-b\otimes a)\,c\,\\
  &=(c\otimes b - b \otimes c)\, a \overset{\eqref{eq:crossandskew}}{=} -\A_n(b\ntimes{n}c)\,a \ \in\R^n \notag.
 \end{align}
It remains to prove the first equality \eqref{eq:GrassmannN}$_1$. In the dimension $n=2$ we have
\begin{align}
 \begin{pmatrix}-\alpha_2\\ \alpha_1 \end{pmatrix} \left(\begin{pmatrix}\beta_1\\ \beta_2 \end{pmatrix}\ntimes{2}\begin{pmatrix}\gamma_1\\ \gamma_2 \end{pmatrix}\right)
 & = \begin{pmatrix}-\alpha_2\\ \alpha_1 \end{pmatrix}\cdot(\beta_1\,\gamma_2-\beta_2\,\gamma_1)
 =  \begin{pmatrix}\alpha_2\,\beta_2\,\gamma_1 -\alpha_2\,\beta_1\,\gamma_2 \\
    \alpha_1\,\beta_1\,\gamma_2 -\alpha_1\,\beta_2\,\gamma_1 \end{pmatrix} \notag \\
    & = \skalarProd{\begin{pmatrix}\alpha_1\\ \alpha_2 \end{pmatrix}}{\begin{pmatrix}\beta_1\\ \beta_2 \end{pmatrix}}\cdot \begin{pmatrix}\gamma_1\\ \gamma_2 \end{pmatrix}- \skalarProd{\begin{pmatrix}\alpha_1\\ \alpha_2 \end{pmatrix}}{\begin{pmatrix}\gamma_1\\ \gamma_2 \end{pmatrix}}\cdot\begin{pmatrix}\beta_1\\ \beta_2 \end{pmatrix}.
\end{align}
Furthermore, with $a=(\overline{a},\alpha_n)^T, b=(\overline{b},\beta_n)^T,c=(\overline{c},\gamma_n)^T$ we obtain on the one hand
\begin{align}
 \nMat{a}{n}^T(b\ntimes{n}c)&=\begin{pNiceArray}{CC:C}
  \Block{2-2}<\large>{\nMat{\overline{a}}{n-1}^T} & & \Block{2-1}{-\alpha_n\cdot\id{n-1}} \\
  \hspace*{3em}&\hspace*{2em} & \hspace*{4em}  \\
  \Hdotsfor{3}\\
   \Block{2-2}{0} & &  \Block{2-1}{\overline{a}^T}\\
   &&
   \end{pNiceArray} 
   \begin{pmatrix} \overline{b}\ntimes{n-1}\overline{c} \\[1ex]
                \gamma_n\cdot\overline{b}-\beta_n\cdot\overline{c}
                \end{pmatrix}\notag\\
    &=
    \begin{pmatrix}
     \nMat{\overline{a}}{n-1}^T(\overline{b}\ntimes{n-1}\overline{c})+\alpha_n\,\beta_n\cdot\overline{c}-\alpha_n\,\gamma_n\cdot\overline{b}\\[1ex]
     \gamma_n\,\skalarProd{\overline{a}}{\overline{b}}-\beta_n\,\skalarProd{\overline{a}}{\overline{c}}
    \end{pmatrix}
\end{align}
and on the other hand:
\begin{align}
 \skalarProd{a}{b}\cdot c-\skalarProd{a}{c}\cdot b & = \skalarProd{\begin{pmatrix}\overline{a}\\ \alpha_n \end{pmatrix}}{\begin{pmatrix}\overline{b}\\ \beta_n \end{pmatrix}}\cdot \begin{pmatrix}\overline{c}\\ \gamma_n \end{pmatrix}- \skalarProd{\begin{pmatrix}\overline{a}\\ \alpha_n \end{pmatrix}}{\begin{pmatrix}\overline{c}\\ \gamma_n \end{pmatrix}}\cdot\begin{pmatrix}\overline{b}\\ \beta_n \end{pmatrix} \notag\\
 & =     \begin{pmatrix}\skalarProd{\overline{a}}{\overline{b}}\cdot \overline{c}-\skalarProd{\overline{a}}{\overline{c}}\cdot \overline{b}
   +\alpha_n\,\beta_n\cdot\overline{c}-\alpha_n\,\gamma_n\cdot\overline{b}\\[1ex]
    \skalarProd{\overline{a}}{\overline{b}}\, \gamma_n-\skalarProd{\overline{a}}{\overline{c}}\,\beta_n
    \end{pmatrix}
\end{align}
so that \eqref{eq:GrassmannN}$_1$ follows by induction over $n\in\N$ , $n\ge2$.

\subsection{Jacobi identity}
In three dimensions, the usual cross product satisfies the \emph{Jacobi identity}:
\begin{equation}\label{eq:Jacobi3}
 a\times(b\times c) + b \times (c\times a) + c \times (a\times b) = 0 \qquad \forall\ a,b,c\in\R^3,
\end{equation}
which follows directly from the usual Grassmann identity \eqref{eq:Grassmann3} for the usual vector triple product. Similarly, having established the generalization of Grassmann's identity involving the generalized cross product $\ntimes{n}$ in the previous section, we obtain the following generalization of \emph{Jacobi's identity}:
\begin{subequations}\label{eq:JacobiN}
\begin{align}
 \nMat{a}{n}^T(b\ntimes{n}c) + \nMat{b}{n}^T(c\ntimes{n}a) + \nMat{c}{n}^T(a\ntimes{n}b) &\overset{\eqref{eq:GrassmannN}_1}{=}0 \quad \forall\ a,b,c\in\R^n
 \intertext{or, equivalently:}
 \A_n(b\ntimes{n}c)\,a+\A_n(c\ntimes{n}a)\,b+\A_n(a\ntimes{n}b)\,c&\overset{\eqref{eq:GrassmannN}_4}{=}0.
\end{align}
\end{subequations}
Surely, the relation \eqref{eq:GrassmannN} can also be used to obtain \eqref{eq:LagrangeN}.

\subsection{Cross product with a matrix}
Furthermore, the generalized cross product can be written as
\begin{equation}
 a\ntimes{n}b=-b\ntimes{n}a=\nMat{-b}{n}a=\left(a^T\nMat{-b}{n}^T\right)^T\,.
\end{equation}
This allows us to define a generalized cross product of a vector $b\in\R^n$ and a matrix $P\in\R^{m\times n}$ from the right and with a matrix $B\in\R^{n\times m}$ from the left, where $m\in\N$, via
\begin{subequations}\label{eq:MatrCrossN}
 \begin{align}
  P\ntimes{n}b&\coloneqq P\nMat{-b}{n}^T\in\R^{m\times\frac{n(n-1)}{2}} \qquad \text{seen as row-wise cross product,}\label{eq:Matcrossprodright}
  \shortintertext{and}
  b\ntimes{n}B&\coloneqq\nMat{b}{n}B\in\R^{\frac{n(n-1)}{2}\times m} \qquad \text{seen as column-wise cross product,}
 \end{align}
and they are connected via
\begin{equation}\label{eq:crosslinkszurechts}
 (b\ntimes{n}B)^T = B^T\nMat{b}{n}^T = -B^T\ntimes{n}b \quad \forall\ B\in\R^{n\times m}, b\in\R^n.
\end{equation}
\end{subequations}
So, especially for the identity matrix $P=\id{n}$ we obtain
\begin{equation}
 \id{n}\ntimes{n}b = \nMat{-b}{n}^T \quad \text{and}\quad b\ntimes{n}\id{n}=\nMat{b}{n}.
\end{equation}
Moreover, for $a\in\R^m$ and $b,c\in\R^n$ it follows
\begin{subequations}
 \begin{align}
  (a\otimes b)\ntimes{n}c &= a\,b^T\nMat{-c}{n}^T = a\,(\nMat{-c}{n}b)^T = a\,(-c\ntimes{n}b)^T = a\otimes(b\ntimes{n} c),\label{eq:dyadiccrossN}
  \intertext{and especially for $c=b$:}
  (a\otimes b)\ntimes{n}b &= 0 \quad \text{for all $a\in\R^m$ and all $b\in\R^n$.}\label{eq:dyadiccross2N}
\intertext{As a consequence we obtain}
(b\otimes a)\ntimes{n}b\quad &\overset{\mathclap{\eqref{eq:dyadiccross2N}}}{=} \quad 2\cdot\sym(a\otimes b)\ntimes{n} b = -2\cdot\skew(a\otimes b)\ntimes{n}b \notag \\
&\overset{\mathclap{\eqref{eq:dyadiccrossN}}}{=} \quad b \otimes (a\ntimes{n}b)\overset{\eqref{eq:crossandskew}}{=}2\cdot b \otimes \a_n(\skew(a\otimes b)) \quad \text{for all $a,b\in\R^n$}.\label{eq:dyadiccross3N}
\end{align}
\end{subequations}

\subsection{Another vector triple}
Already in the scalar triple product we come across the expression $\nMat{b}{n}^T\a\in\R^n$. Hence, we may consider also the following vector triple product for $\a\in\R^{\frac{n(n-1)}{2}}$ and $b,c\in\R^n$:
\begin{align}\label{eq:vectripN}
 \left(\nMat{b}{n}^T\a\right)\ntimes{n}c& = \nMat{\nMat{b}{n}^T\a}{n}c\notag\\
 & = -c\ntimes{n}\nMat{b}{n}^T\a = -\nMat{c}{n}\nMat{b}{n}^T\a =\left(\nMat{c}{n}\ntimes{n} b\right)\a\ \in\R^{\frac{n(n-1)}{2}}.
\end{align}
Again, the corresponding relations to \eqref{eq:GrassmannN} and \eqref{eq:vectripN} for the usual cross product coincide, whereas the situation is different for the generalized cross product due to the non-symmetry of the matrices $\nMat{.}{n}$.

The inductive view \eqref{eq:crossNmat}$_1$ on the appearing matrix in \eqref{eq:vectripN} shows for all $a,b\in\R^n$:
\begin{align}\label{eq:acrossb_inRn}
 \nMat{a}{n}\ntimes{n}b&= \nMat{a}{n}\nMat{-b}{n}^T = 
 \notag \\[1ex]
 & = \begin{pNiceArray}{C:C}
     \nMat{\,\overline{a}\,}{n-1}\ntimes{n-1}\overline{b} & \beta_n\cdot\nMat{\overline{a}}{n-1}\\
     \Hdotsfor{2}\\
      \alpha_n\cdot\nMat{\,\overline{b}\,}{n-1}^T & -\overline{a}\otimes \overline{b}-\alpha_n\,\beta_n\cdot\id{n-1}
     \end{pNiceArray}\in\R^{\frac{n(n-1)}{2}\times\frac{n(n-1)}{2}},
\end{align}
and especially for $a=b$: 
\begin{equation}\label{eq:bcrossb_inRn}
 \nMat{b}{n}\ntimes{n}b =-\nMat{b}{n}\nMat{b}{n}^T \in\Sym{\frac{n(n-1)}{2}}.
\end{equation}

Moreover, we may also consider the following matrix multiplication:
\begin{equation}
 \P\nMat{b}{n}\in\R^{m\times n} \qquad \text{for} \ \P\in\R^{m\times\frac{n(n-1)}{2}}
\end{equation}
and, like in \eqref{eq:MatrCrossN}, related by transposition also $\nMat{b}{n}^T(.)$ for an $\left(\frac{n(n-1)}{2}\times m\right)$-matrix.

\subsection{Room identity}
Surely, the considerations in the previous subsections were inspired by the corresponding relations known for the usual cross product. So, from the usual Grassmann identity \eqref{eq:Grassmann3} one can deduce the usual Jacobi \eqref{eq:Jacobi3} and Lagrange \eqref{eq:Lagrange3} identities. Moreover, the usual Grassmann identity \eqref{eq:Grassmann3} for the vector triple in three dimensions allows also to conclude that 
\begin{equation}\label{eq:Room}
 \Anti{a}\Anti{b}=\Anti{a}\times b = b\otimes a -\skalarProd{a}{b}\cdot\id3 \quad \forall\ a,b\in\R^3.
\end{equation}
This algebraic relation is already contained in \cite[p. 691 (ii)]{Room1952vectprod}. For this reason let us call it \emph{Room identity}. The relation \eqref{eq:Room} turned out to be very important also from an application point of view, cf.~\cite{agn_lewintan2020generalKorn,agn_lewintan2020KornLp_tracefree} and the references contained therein.

Returning to the $n$-dimensional case, we have for arbitrary $a,b\in\R^n$:
\begin{equation}
 \nMat{a}{n}^T\nMat{b}{n}\,x=\nMat{a}{n}^T(b\ntimes{n}x)\overset{\eqref{eq:GrassmannN}}{=}(\skalarProd{b}{a}\cdot\id{n}-b\otimes a)\,x \qquad \forall\ x\in\R^n,
\end{equation}
so that as an analogue to \emph{Room's identity} it follows
\begin{equation}\label{eq:RoomN}
 \nMat{a}{n}^T\nMat{b}{n} = \skalarProd{b}{a}\cdot\id{n}-b\otimes a \in\R^{n\times n} \qquad \forall\ a,b\in\R^n,
\end{equation}
and especially for $a=b$:
\begin{equation}\label{eq:RoomNbb}
 \nMat{b}{n}^T\nMat{b}{n} = \norm{b}^2\cdot\id{n}-b\otimes b \in\Sym{n}.
\end{equation}
Note that the minus sign is missing in the generalized Room identity \eqref{eq:RoomN} due to the lack of skew-symmetry of the matrix $\nMat{a}{n}^T$.

Interchanging the roles of $a$ and $b$ in \eqref{eq:RoomN} we further deduce that
\begin{equation}
  \nMat{a}{n}^T\nMat{b}{n} -  \nMat{b}{n}^T\nMat{a}{n} = a\otimes b - b \otimes a \overset{\eqref{eq:crossandskew}}{=} \A_n(a\ntimes{n} b).
\end{equation}
Since $\tr(a\otimes b)=\skalarProd{a}{b}$, the expression \eqref{eq:RoomN} shows that the entries of $\nMat{a}{n}^T\nMat{b}{n}$ are linear combinations of the entries of the dyadic product $a\otimes b$. Also the converse holds true:
\begin{equation}\label{eq:RoomNdyadic0}
 b\otimes a = \frac{\nMat{a}{n}^T\nMat{b}{n}}{n-1}\cdot\id{n}-\nMat{a}{n}^T\nMat{b}{n},
\end{equation}
where we leave it as a short exercise for the reader to verify (e.g., by induction) that
\begin{equation}\label{eq:trMataMatb}
 \tr(\nMat{a}{n}^T\nMat{b}{n})=\skalarProd{\nMat{a}{n}}{\nMat{b}{n}}=(n-1)\,\skalarProd{a}{b}.
\end{equation}
Recall that the associated matrix $\Anti{.}$ with the usual cross product $\times$ in $\R^3$ is a (skew-symmetric) square matrix, while the associated matrix $\nMat{.}{n}$ with the generalized cross product $\ntimes{n}$ is an $(\frac{n(n-1)}{2}\times n)$-matrix and therefore is a square matrix only in the case of $n=3$. Hence, despite of the situation in Room's identity \eqref{eq:Room} we may also interchange the matrices in its $n$-dimensional analogue \eqref{eq:RoomN}, i.e.  consider the expression in \eqref{eq:acrossb_inRn}.

Returning to the usual \emph{Room identity} we have
\begin{subequations}
\begin{equation}
 \Anti{a}\times b = L(a\otimes b)  \quad \text{and} \quad a \otimes b = L(\Anti{a}\times b) \quad \forall \ a,b\in\R^3,\label{eq:RoomLin}
 \end{equation}
 denoting by $L(.)$ a corresponding linear operator with constant coefficients, not necessarily the same in any two places here and in the following.
 
On the one hand, we associate with the matrix $\Anti{.}$ a representation of the usual cross product. \emph{Room's identity} can be generalized to higher dimensions in three different ways. We have already seen in \eqref{eq:RoomN} and \eqref{eq:RoomNdyadic0} an extension to:
\begin{equation}\label{eq:RoomNMatMat}
 \nMat{a}{n}^T\nMat{b}{n}= L(a\otimes b)  \quad\text{and}\quad a \otimes b = L(\nMat{a}{n}^T\nMat{b}{n}) \quad \forall \ a,b\in\R^n.
\end{equation}
However, a similar result to \eqref{eq:RoomLin} also holds true for the generalized cross product of the matrix coming from the matrix representation of the generalized cross product with a vector, see \cite{agn_lewintan2020KornLpN_tracefree}:
\begin{equation}\label{eq:RoomNMat}
\begin{split}
 \nMat{a}{n}\ntimes{n} b &= L(a\otimes b)  \quad\forall \ a,b\in\R^n, n\ge2 \\
 \text{and}\quad
 a \otimes b &= L(\nMat{a}{n}\ntimes{n} b) \quad \forall \ a,b\in\R^n, n\ge3.
 \end{split}
\end{equation}
These relations also apply to the case of $a\ntimes{n}\nMat{b}{n}^T = \nMat{a}{n}\nMat{b}{n}^T=-\nMat{a}{n}\ntimes{n}b$, which for $n=2$ is only a scalar, so that the last relation in \eqref{eq:RoomNMat} is only valid for $n\ge3$.

On the other hand, \emph{Room's identity} in three dimensions can also be seen as an expression for the cross product of a skew-symmetric matrix with a vector:
\begin{equation}
 A\times b = L(\axl(A)\otimes b)  \quad\text{and}\quad \axl(A) \otimes b = L(A\times b) \quad\forall \ A\in\so3, b\in\R^3,\tag{\ref{eq:RoomLin}'}
 \end{equation}
where $\axl:\so{3}\to\R^3$ denotes the inverse of $\Anti{.}$. It is interesting that a similar result holds true  for $(n\times n)$-skew symmetric matrices in all dimensions $n\ge2$, see \cite{agn_lewintan2020KornLpN_tracefree}:
\begin{equation}
\begin{split}
 A\ntimes{n}b &= L(\a_n (A) \otimes b)  \\
 \text{and}\quad \a_n (A) \otimes b &= L(A\ntimes{n}b) \quad\forall \ A\in\so{n}, b\in\R^n,\label{eq:RoomNskew}
\end{split}
\end{equation}
\end{subequations}
where \eqref{eq:RoomNMat}$_1$ and \eqref{eq:RoomNskew}$_1$ follow directly from the definition of the generalized cross product of a matrix and a vector but for \eqref{eq:RoomNMat}$_2$ and \eqref{eq:RoomNskew}$_2$ inductive proofs are needed, cf.~\cite{agn_lewintan2020KornLpN_tracefree}.

\begin{remark}
 We have seen that \emph{Room's identity} \eqref{eq:Room} admits three different generalizations to higher dimensions \eqref{eq:RoomNMatMat}, \eqref{eq:RoomNMat}, \eqref{eq:RoomNskew} which coincide in three dimensions when considering the usual cross product and the associated matrix with it, since the latter is a skew-symmetric (square) matrix. Whereas, \emph{Grassmann's} and \emph{Jacobi's} identities generalize only in the ways presented in \eqref{eq:GrassmannN} and \eqref{eq:JacobiN}. Indeed, these relations are comparable to the situation in three dimensions when considering the usual triple vector product ~ $a\times(b\times c) = \Anti{a}(b\times c)$ ~ since ~ $\Anti{a}^T=-\Anti{a}$.
\end{remark}

\subsection{Simultaneous cross product}\label{sec:simultaneous}
Of special interest is a simultaneous cross product of a square matrix $P\in\R^{n\times n}$ and a vector $b\in\R^n$ from both sides:
\begin{equation}\label{eq:doublecrossN}
 b\ntimes{n}P\ntimes{n}b = \nMat{b}{n}P\nMat{-b}{n}^T \overset{\eqref{eq:crosslinkszurechts}}{=} -\left((P\ntimes{n}b)^T\ntimes{n}b\right)^T\in\R^{\frac{n(n-1)}{2}\times\frac{n(n-1)}{2}}\,,
\end{equation}
where, due to the associativity of matrix multiplication, we can omit parenthesis. Since
\begin{subequations}
\begin{equation}
 \left(b\ntimes{n}P\ntimes{n}b\right)^T\overset{\eqref{eq:doublecrossN}}{=} -\nMat{b}{n}P^T\nMat{-b}{n}^T= b\ntimes{n}P^T\ntimes{n}b
\end{equation}
it follows for $S\in\Sym{n}$ and $A\in\so{n}$ immediately:
\begin{equation}
  b\ntimes{n} S  \ntimes{n} b \in\Sym{\frac{n(n-1)}{2}} \quad \text{and}\quad b\ntimes{n} A \ntimes{n} b \in\so{\frac{n(n-1)}{2}} 
\end{equation}
 and for all $P\in\R^{n\times n}$:
 \begin{equation}\label{eq:doublecrossNsymmetry}
   b\ntimes{n} \sym{P} \ntimes{n} b = \sym(b\ntimes{n} P\ntimes{n} b),\quad  b\ntimes{n} \skew{P} \ntimes{n} b = \skew(b\ntimes{n} P\ntimes{n} b).
 \end{equation}
\end{subequations}
For $P=\id{n}$ the identity matrix we obtain
\begin{equation}
 b\ntimes{n}\id{n}\ntimes{n} b  = \nMat{b}{n}\nMat{-b}{n}^T \overset{\eqref{eq:bcrossb_inRn}}{=}
 \nMat{b}{n}\ntimes{n}b\in\Sym{\frac{n(n-1)}{2}}.
\end{equation}
Moreover, for $a,b,c\in\R^n$ it follows that
\begin{subequations}\label{eq:doublecrossdyadicN}
\begin{equation}
  b\ntimes{n} (a\otimes c)\ntimes{n} b  \overset{\eqref{eq:dyadiccrossN}}{=}(b\ntimes{n} a)\otimes (c\ntimes{n} b),
\end{equation}
and especially for $c=b$ that
\begin{align}
 b\ntimes{n} (a\otimes b)\ntimes{n} b &=b\ntimes{n} (b\otimes a)\ntimes{n} b\notag\\
 &= b\ntimes{n}\sym(a\otimes b)\ntimes{n} b = b\ntimes{n}\skew(a\otimes b)\ntimes{n} b=0 .
\end{align}
\end{subequations}
Furthermore, for a square matrix $\P\in\R^{\frac{n(n-1)}{2}\times\frac{n(n-1)}{2}}$ and a vector $b\in\R^n$ we obtain
\begin{equation}\label{eq:doubleb}
 \nMat{b}{n}^T\P\nMat{b}{n}\in\R^{n\times n},
\end{equation}
which has comparable properties to the simultaneous cross product above, for instance:
\begin{subequations}
\begin{align}
 \left(\nMat{b}{n}^T\P\nMat{b}{n}\right)^T&=\nMat{b}{n}^T\P^T\nMat{b}{n}
\intertext{which gives:}
  \sym \left(\nMat{b}{n}^T\P\nMat{b}{n}\right) &= \nMat{b}{n}^T\sym\P\nMat{b}{n}, \\
  \shortintertext{as well as}
  \skew \left(\nMat{b}{n}^T\P\nMat{b}{n}\right) &= \nMat{b}{n}^T\skew\P\nMat{b}{n}.
 \end{align}
\end{subequations}
And for the identity matrix $\P=\id{\frac{n(n-1)}{2}}$ we obtain:
\begin{equation}\label{eq:doublebId}
 \nMat{b}{n}^T\id{\frac{n(n-1)}{2}}\nMat{b}{n}= \nMat{b}{n}^T\nMat{b}{n} \overset{\eqref{eq:RoomNbb}}{=} \norm{b}^2\cdot\id{n}-b\otimes b.
\end{equation}
Again, the corresponding expressions to \eqref{eq:doublecrossN} and \eqref{eq:doubleb} coming from the usual cross product in three dimensions just coincide:
\begin{equation}
 b\times P \times b = \Anti{b} P\Anti{b}\in\R^{3\times3} \quad   \text{for } b\in\R^3, P\in\R^{3\times3}.
\end{equation}

\section{Differential operators}
Let us now come back to the interplay between a linear homogeneous differential operator with constant coefficients and its symbol, thus, replacing $b$ by the vector differential operator $\nabla$ in the algebraic relation presented in the previous sections. For that purpose, let $\Omega\subseteq\R^n$ be open, $n\ge2$ and $n,m\in\N$. As usual, the derivative and the divergence of a vector field rely on the dyadic product and the scalar product, respectively:
\begin{equation}
 \begin{split}
  \D a & \coloneqq a\otimes \nabla\in C^\infty_c(\Omega,\R^{m\times n}) \qquad \text{for } a\in C^\infty_c(\Omega,\R^m),\\
  \div a & \coloneqq \skalarProd{a}{\nabla}= a^T\nabla=\tr(\D a)\in C^\infty_c(\Omega,\R) \qquad \text{for } a\in C^\infty_c(\Omega,\R^n),
 \end{split}
\end{equation}
where the latter can be generalized to a matrix divergence in a \textit{row-wise} way:
\begin{equation}
 \Div P \coloneqq P\,\nabla\in C^\infty_c(\Omega,\R^{m}) \qquad \text{for } P\in C^\infty_c(\Omega,\R^{m\times n}).
\end{equation}
In three dimensions, the usual curl is seen as
\begin{equation}
 \curl a \coloneqq a \times (-\nabla) = \nabla \times a = \Anti{\nabla}a = 2\cdot\axl(\skew\D a)\quad  \text{for } a\in C^\infty_c(\Omega,\R^3), n=3.
\end{equation}
Similarly, in arbitrary dimension $n\ge2$ the generalized curl is related to the generalized cross product via
\begin{equation}
\begin{split}
 \ncurl{n}a &\coloneqq a\ntimes{n}(-\nabla)=\nabla\ntimes{n}a = \nMat{\nabla}{n}a \\
 &\overset{\eqref{eq:crossandskew}}{=} 2\cdot \a_n(\skew \D a)\in C^\infty_c(\Omega,\R^{\frac{n(n-1)}{2}})  \qquad \text{for } a\in C^\infty_c(\Omega,\R^n),
 \end{split}
\end{equation}
where the latter expression is usually considered in index-notation to introduce the generalized curl.

Furthermore, we consider the new differential operation
\begin{equation}\label{eq:newdiffop}
 \nMat{\nabla}{n}^T\a \in C^\infty_c(\Omega,\R^n) \quad \text{for } \a\in C^\infty_c(\Omega,\R^{\frac{n(n-1)}{2}}),
\end{equation}
which differs from the usual curl and from $\ncurl{\frac{n(n-1)}{2}}\a$ also in the three-dimensional case:
\begin{equation}
\curl \begin{pmatrix} \alpha_1\\\alpha_2\\\alpha_3 \end{pmatrix} =
 \begin{pmatrix}
  \partial_2\alpha_3 - \partial_3\alpha_2\\
  \partial_3\alpha_1 - \partial_1\alpha_3\\
  \partial_1\alpha_2 - \partial_2\alpha_1
 \end{pmatrix},
\quad
 \ncurl{3}\begin{pmatrix} \alpha_1\\\alpha_2\\\alpha_3 \end{pmatrix} =
 \begin{pmatrix}
  \partial_1\alpha_2 - \partial_2\alpha_1\\
  \partial_1\alpha_3 - \partial_3\alpha_1\\
  \partial_2\alpha_3 - \partial_3\alpha_2
 \end{pmatrix},
\quad
\nMat{\nabla}{3}^T\begin{pmatrix} \alpha_1\\\alpha_2\\\alpha_3 \end{pmatrix} =
 \begin{pmatrix}
  -\partial_2\alpha_1 - \partial_3\alpha_2\\
  \partial_1\alpha_1 - \partial_3\alpha_3\\
  \partial_1\alpha_2 + \partial_2\alpha_3
 \end{pmatrix}.
\end{equation}
To the best of our knowledge, the operator $\nMat{\nabla}{n}^T: C^\infty_c(\Omega,\R^{\frac{n(n-1)}{2}})\to C^\infty_c(\Omega,\R^n)$ has not received any attention in the literature so far, not even in index notation. However, this differential operator plays the counterpart in the integration by parts formula for the generalized $\ncurl{n}$, see \eqref{eq:partInt} below. This adjoint differential operator appears here because the matrix associated with the generalized cross product has no symmetry.

Furthermore, it is the matrix representations of the cross product which allows us to introduce also a \textit{row-wise} generalized matrix curl operator:
\begin{equation}
 \nCurl{n}P\coloneqq P\ntimes{n}(-\nabla)\overset{\eqref{eq:Matcrossprodright}}{=} P\nMat{\nabla}{n}^T  \qquad \text{for } P\in C^\infty_c(\Omega,\R^{m\times n}),
\end{equation}
which is connected to the the column-wise differential operation:
\begin{equation}
\nabla\ntimes{n}B\coloneqq\nMat{\nabla}{n}B\overset{\eqref{eq:crosslinkszurechts}}{=} \left[\nCurl{n}B^T\right]^T \qquad \text{for } B\in C^\infty_c(\Omega,\R^{n\times m}),
\end{equation}
and like in the three dimensional setting can be referred to as $\nCurl{n}^T$.

Moreover, the matrix representation of the curl operation offers also a further differential operator $(.)\nMat{\nabla}{n}$ for  $\left(m\times\frac{n(n-1)}{2}\right)$-matrix fields:
\begin{align}
 \P\nMat{\nabla}{n}\in C^\infty_c(\Omega,\R^{m\times n}) \quad \text{for } \P\in C^\infty_c(\Omega,\R^{m\times \frac{n(n-1)}{2}}),
\end{align}
i.e. the row-wise differentiation from \eqref{eq:newdiffop}, and again related by transposition also $\nMat{\nabla}{n}^T(.)$ for $\left(\frac{n(n-1)}{2}\times m\right)$-matrix fields.

Surely, it follows from \eqref{eq:dyadiccross2N}:
\begin{subequations}
 \begin{align}\label{eq:curlgrad}
  \ncurl{n}(\nabla \alpha) &\equiv 0 \qquad \text{for } \alpha\in C^\infty_c(\Omega,\R),
\intertext{or even}
\nCurl{n}(\D a) &\equiv 0 \qquad \text{for } a\in C^\infty_c(\Omega,\R^m),
 \end{align}
\end{subequations}
and from \eqref{eq:dyadiccross3N}:
\begin{align}
 \nCurl{n}(\D a)^T &= -2\cdot\nCurl{n}(\sym\D a)= 2\cdot\nCurl{n}(\skew\D a) \notag\\
 &= \left[\D\ncurl{n}a\right]^T = 2\cdot \left[\D\a_n(\skew\D a)\right]^T  \qquad \text{for } a\in C^\infty_c(\Omega,\R^n).
\end{align}
And, as analogue to the usual $\div\circ\curl\equiv 0$, we have in $n$-dimensions:
\begin{equation}\label{eq:KerDiv}
 \div \nMat{\nabla}{n}^T\a \overset{\eqref{eq:scalartripleNbb}}{\equiv}0 \qquad \text{for } \a\in C^\infty_c(\Omega,\R^{\frac{n(n-1)}{2}}).
\end{equation}

We recall the following definition.

\begin{definition} Let $\Omega\subseteq\R^n$ be open.
 A linear homogeneous differential operator with constant coefficients $\mathcal{A}:C^\infty_c(\Omega,\R^m)\to C^\infty_c(\Omega,\R^N)$ is said to be \emph{elliptic} if its symbol $\mathbb{A}(b)\in\operatorname{Lin}(\R^m,\R^N)$ is injective for all $b\in\R^n\backslash\{0\}$.
\end{definition}

It follows, from $b\times b=0$ for $b\in\R^3$ that the usual $\curl$ operator is not elliptic. Similarly, also the generalized $\ncurl{n}$ is not elliptic.

Since the kernel of $\begin{pmatrix}-\beta_2\\\beta_1\end{pmatrix}:\R\to\R^2$ consists only of $0$ for all $\begin{pmatrix}\beta_1\\\beta_2\end{pmatrix}\in\R^2\backslash\{0\}$, the operator $\nMat{\nabla}{2}^T$ is elliptic.

To see that $\nMat{\nabla}{n}^T$ is not elliptic for all $n\ge3$ we consider
\begin{equation}
 \nMat{\begin{pmatrix}\beta_1\\\beta_2\\\beta_3 \end{pmatrix}}{3}^T\begin{pmatrix}\beta_3\\-\beta_2\\\beta_1 \end{pmatrix}= \begin{pmatrix}-\beta_2 & -\beta_3 & 0 \\ \beta_1 & 0 & -\beta_3 \\ 0 & \beta_1 & \beta_2 \end{pmatrix}\begin{pmatrix}\beta_3\\-\beta_2\\\beta_1 \end{pmatrix}  =0
\end{equation}
which gives the non-ellipticity of  $\nMat{\nabla}{3}^T$ and the non-ellipticity in the higher dimensional cases follows from the inductive structure.

\subsection{Nye formulas}
Denoting by $\Curl$ the matrix curl operator related to the usual curl for vector fields in $\R^3$, \emph{Room's identity} \eqref{eq:RoomLin} becomes after interchanging $b$ by $\nabla$:
\begin{subequations}
\begin{align}\label{eq:NyeLin}
\Curl(\Anti{a})= L(\D a) \quad \text{and}\quad \D a = L(\Curl \Anti{a})  \quad \text{for } a\in C^\infty_c(\Omega,\R^3),
\end{align}
where $\Omega(\text{open})\subseteq\R^3$ for a moment. More precisely, they read
\begin{align}
\Curl(\Anti{a})&=\div a\cdot\id3-(\D a)^T \\
\shortintertext{and}
\D a &= \frac{\tr(\Curl \Anti{a})}{2}\cdot\id3-(\Curl \Anti{a})^T
\end{align}
and are better known as \emph{Nye formulas}  \cite[eq.\!\! (7)]{Nye53}. Surely, \eqref{eq:NyeLin}$_1$ is not surprising at all, but  \eqref{eq:NyeLin}$_2$ implies that the entries of the derivative of a skew-symmetric matrix field are linear combinations of the entries of the matrix curl:
\begin{align}\label{eq:derskew}
 \D A = L (\Curl A) \qquad \text{for } A\in C^\infty_c(\Omega,\so{3}).
\end{align}
\end{subequations}
Returning to the higher dimensional case we conclude, from \eqref{eq:RoomNdyadic0} or \eqref{eq:RoomNMatMat}, that
\begin{align}
 \D a = L(\nMat{a}{n}^T\nMat{\nabla}{n}) \qquad \text{for } a\in C^\infty_c(\Omega,\R^n),
\end{align}
and from \eqref{eq:RoomNMat}
\begin{align}
 \D a = L(\nCurl{n}\nMat{a}{n}) \qquad \text{for } a\in C^\infty_c(\Omega,\R^n), n\ge3.
\end{align}
Note, however, that the latter expression is (in general) not related to $\ncurl{n}a$.
Finally, from \eqref{eq:RoomNskew} we deduce that
\begin{align}
 \D \a_n(A) = L(\Curl_n A) \qquad \text{for } A\in C^\infty_c(\Omega,\so{n}).
\end{align}
which implies \eqref{eq:derskew} in all dimensions $n\ge2$:
\begin{align}
 \D A =  L (\nCurl{n} A) \qquad \text{for } A\in C^\infty_c(\Omega,\so{n}),
\end{align}
a relation that is usually derived in index notations.

\subsection{Incompatibility operator}
In three dimensions, the incompatibility $\inc$ is usually defined via
\begin{equation}
 \inc P \coloneqq \Curl[(\Curl P)^T] = -\nabla\times P^T\times \nabla \qquad \text{for } P\in C^\infty_c(\Omega,\R^{3\times 3}), n=3.
 \end{equation}
In higher dimensions, for $P\in C^\infty_c(\Omega,\R^{n\times n})$ we consider the generalized incompatibility operator (where for simplicity we drop the transposition and the minus sign) given by:
\begin{align}
 \inc_n P&\coloneqq\nabla\ntimes{n}P\ntimes{n}\nabla=-\nMat{\nabla}{n}P\nMat{\nabla}{n}^T\\
 &\overset{\mathclap{\eqref{eq:doublecrossN}}}{=} \ -\left[\nCurl{n}\left(\left(\nCurl{n}P\right)^T\right)\right]^T\in C^\infty_c(\Omega,\R^{\frac{n(n-1)}{2}\times \frac{n(n-1)}{2}}).
\end{align}
It possesses the properties known from the usual incompatibility operator in three dimensions, it follows namely from \eqref{eq:doublecrossNsymmetry} that
\begin{equation}
 \sym\inc_n P=\inc_n\sym P \quad \text{and}\quad \skew\inc_n P=\inc_n\skew P
\end{equation}
and from \eqref{eq:doublecrossdyadicN} for $a\in C^\infty_c(\Omega,\R^n)$:
\begin{align}
 \inc_n\D a=\inc_n(\D a)^T = \inc_n(\sym\D a) = \inc_n(\skew\D a)\equiv0.
\end{align}

Furthermore, for matrix fields $\P\in C^\infty_c(\Omega,\R^{\frac{n(n-1)}{2}\times \frac{n(n-1)}{2}})$ we consider the new differential operation
\begin{equation}
 \nMat{\nabla}{n}^T\P\nMat{\nabla}{n}\in C^\infty_c(\Omega,\R^{n\times n})
\end{equation}
with similar properties to the generalized incompatibility operator, see section \ref{sec:simultaneous}. Especially for $\zeta\in C^\infty_c(\Omega,\R)$ we obtain:
\begin{equation}
 \nMat{\nabla}{n}^T\zeta\cdot\id{\frac{n(n-1)}{2}}\nMat{\nabla}{n} \overset{\eqref{eq:doublebId}}{=} \Delta\zeta\cdot\id{n}-\D\nabla\zeta,
\end{equation}
where we have used that from an algebraic point of view the Laplacian $\Delta=\norm{\nabla}^2$ behaves like a scalar and where $\D\nabla\zeta$ is the Hessian matrix of $\zeta$. The latter expression reminds of the known identity in $n=3$ dimensions for the usual incompatibility operator:
\begin{equation}
 \inc (\zeta\cdot\id3) = \Delta\zeta\cdot\id3-\D\nabla\zeta.
\end{equation}
It is clear from the integration by parts formula for the generalized curl \eqref{eq:partIntMat}, how the operator $\nMat{\nabla}{n}^T(.)\nMat{\nabla}{n}$ 
plays the counterpart in the corresponding integration by parts formula for the generalized incompatibility operator. For the corresponding formula in the usual three dimensional case we refer to \cite{Amstutz2016analysis}.

\begin{remark}
 In three dimensions, the usual incompatibility operator $\inc$ occurs, e.g., in the modelling of dislocated crystals or in the modelling of elastic materials with dislocations, where the notion of incompatibility is at the basis of a new paradigm to describe the inelastic effects, see e.g.~\cite{Lazar2010disclocations,vanGoethem2011incompatibility,vanGoethem2012dislocations,Maggiani2015incompatible,Amstutz2016analysis,agn_ebobisse2017fourth}. The index-free view presented above should provide a better understanding of such phenomena also in higher dimensions.
\end{remark}

\subsection{Vector Laplacian}
 Recalling \eqref{eq:RoomNbb} we have for all $a,b\in\R^n$:
 \begin{equation}
  \nMat{b}{n}^Tb\ntimes{n}a=\nMat{b}{n}^T\nMat{b}{n}a \overset{\eqref{eq:RoomNbb}}{=}\norm{b}^2 \cdot a - b\cdot\skalarProd{b}{a}.
 \end{equation}
Thus, interchanging $b$ by $\nabla$ we deduce
\begin{equation}\label{eq:LaplacianN}
\Delta a = \nabla \div a + \nMat{\nabla}{n}^T\ncurl{n}a \qquad \text{for } a\in C^\infty_c(\Omega,\R^{n}), n\ge2,
\end{equation}
which is the generalization of the known expression for the vector Laplacian in $n=3$ dimensions:
\begin{equation}\label{eq:Laplacian3}
\Delta a = \nabla \div a - \curl\curl a \qquad \text{for } a\in C^\infty_c(\Omega,\R^{3}),
\end{equation}
and the appearance of the minus sign comes from the fact that the matrix associated with the usual cross product is a skew-symmetric matrix.

 Since the matrix divergence and matrix curl act row-wise, we obtain
 \begin{equation}
 \Delta P =  \D\Div P +(\nCurl{n}P)\nMat{\nabla}{n}\qquad \text{for } P\in C^\infty_c(\Omega,\R^{m\times n}),
\end{equation}
 for $m,n\in\N$, $n\ge2$, meaning that the entries of the Laplacian of a matrix field $P$ are linear combinations of the entries of the derivative of the matrix curl and of the entries of the derivative of the matrix divergence.

\subsection{Integration by parts}
For the sake of completeness, we include the integration by parts formula for the generalized matrix curl:
Let $\Omega\subset\R^n$ be an open and bounded set with Lipschitz boundary $\partial\Omega$ and outward unit normal $\nu$. For all $a\in C^1(\overline{\Omega},\R^{n})$ and all $\a\in C^1(\overline{\Omega},\R^{\frac{n(n-1)}{2}})$ we have
\begin{subequations}
\begin{align}\label{eq:partInt}
 \int_{\Omega} \skalarProd{\ncurl{n}a}{\a}+\skalarProd{a}{\nMat{\nabla}{n}^T\a}\, \mathrm{d}x &= \int_{\partial\Omega}\skalarProd{a\ntimes{n}(-\nu)}{\a}\,\mathrm{d}S,
\intertext{so that for matrix fields $P\in C^1(\overline{\Omega},\R^{m\times n})$ and  $\P\in C^1(\overline{\Omega},\R^{m\times \frac{n(n-1)}{2}})$ it follows}
\label{eq:partIntMat}
 \int_{\Omega} \skalarProd{\nCurl{n}P}{\P}+\skalarProd{P}{\P\nMat{\nabla}{n}}\, \mathrm{d}x &= \int_{\partial\Omega}\skalarProd{P\ntimes{n}(-\nu)}{\P}\,\mathrm{d}S,
\end{align}
\end{subequations}
and we refer to \cite{agn_lewintan2020KornLpN_tracefree} for a coordinate-free proof for square matrix fields $P$.

\subsection{Helmholtz decomposition}
It is well known that any vector field $a\in C^\infty_c(\R^n,\R^n)$ admits a decomposition into a divergence-free vector field and a gradient field, i.e.~a $\ncurl{n}$-free part, see e.g.~\cite{FujiwaraMorimoto} and for a deviation from the Hodge decomposition see \cite{Iwaniec1992}. Let us denote the divergence-free part by $a_{\div}$ and the $\ncurl{n}$-free by $a_{\ncurl{n}}$, so having $a=a_{\div}+a_{\ncurl{n}}$. At the end of our vector calculus, we give the explicit index-free expressions of these parts and provide the Helmholtz decomposition explicitly in all dimensions $n\ge2$. More precisely, we show that
\begin{subequations}\label{eq:HelmholtzN}
\begin{align}
 a_{\ncurl{n}}(x) &= \nabla_x\int_{\R^n} \G\cdot\div a(y)\,\mathrm{d}y\\
 &=\frac{1}{n\omega_n} \int_{\R^n} (x-y)\cdot\left(\norm{x-y}^{-n}\div a(y)\right)\,\mathrm{d}y, \\
 \intertext{and}
 a_{\div}(x)&= \nMat{\nabla_x}{n}^T\int_{\R^n} \G\cdot\ncurl{n}a(y)\,\mathrm{d}y\\
 &=\frac{1}{n\omega_n}\int_{\R^n} \nMat{x-y}{n}^T\left(\norm{x-y}^{-n}\cdot\ncurl{n}a(y)\right)\,\mathrm{d}y,
\end{align}
\end{subequations}
where $\G$ denotes the normalized \textit{fundamental Green function for the Laplacian} for the entire space $\R^n$ and is given by
\begin{equation}
 \G=\begin{cases}
     \frac{1}{2\pi}\ln\norm{x-y}, & \text{for } n=2, \\
     \frac{1}{n(2-n)\omega_n}\norm{x-y}^{2-n}, &\text{for } n\ge3,
    \end{cases}
\end{equation}
denoting by $\omega_n$ the volume of the unit ball in $\R^n$, see \cite[Section 2.4]{GT}. Indeed, the first expressions in \eqref{eq:HelmholtzN} follow from the decomposition of the vector Laplacian in \eqref{eq:LaplacianN} since for $a\in C^\infty_c(\R^n,\R^n)$ we have
\begin{align}
 a(x)&=\int_{\R^n}a(y)\cdot\Delta_x\G\,\mathrm{d}y = \Delta_x \int_{\R^n}a(y)\cdot\G \,\mathrm{d}y \notag\\
  &\overset{\mathclap{\eqref{eq:LaplacianN}}}{=} \ \nabla_x\div_x\int_{\R^n}a(y)\cdot\G \,\mathrm{d}y + \nMat{\nabla_x}{n}^T\ncurl{n}_{,x}\int_{\R^n}a(y)\cdot\G \,\mathrm{d}y \notag\\
  & = \nabla_x\int_{\R^n}\skalarProd{a(y)}{\nabla_x\G}\,\mathrm{d}y+ \nMat{\nabla_x}{n}^T\int_{\R^n}\nabla_x\G\ntimes{n} a(y)\,\mathrm{d}y\notag\\
  & \overset{(\ast)}{=}\nabla_x \int_{\R^n}  \skalarProd{a(y)}{-\nabla_y\G}\,\mathrm{d}y+\nMat{\nabla_x}{n}^T\int_{\R^n}a(y)\ntimes{n}\nabla_y\G\,\mathrm{d}y\notag\\
  & \overset{(\ast\ast)}{=} \nabla_x\int_{\R^n} \G\cdot\div a(y)\,\mathrm{d}y + \nMat{\nabla_x}{n}^T\int_{\R^n} \G\cdot\ncurl{n}a(y)\,\mathrm{d}y\,,\notag
\end{align}
where in $(\ast)$ we used that $\nabla_x\G = -\nabla_y\G$ and in $(\ast\ast)$ the relations
\begin{align}\label{eq:product}
\div(\alpha\cdot a)=\skalarProd{\nabla\alpha}{a} +\alpha\div a \quad\text{and}\quad
\ncurl{n}(\alpha\cdot a)=\nabla\alpha\ntimes{n}a+\alpha\cdot\ncurl{n}a,
\end{align}
for $\alpha\in C^\infty_c(\Omega)$ and $a\in C^\infty_c(\Omega,\R^n)$.  Since we have 
  \begin{equation}
 \nabla_x\G=\frac{1}{n\omega_n}\norm{x-y}^{-n}\cdot(x-y) \qquad \text{for } n\ge2,
 \end{equation}
we obtain
\begin{subequations}
\begin{align}
 a_{\ncurl{n}}(x) &=\frac{1}{n\omega_n} \int_{\R^n} (x-y)\cdot\left(\norm{x-y}^{-n}\div a(y)\right)\,\mathrm{d}y, \\
 a_{\div}(x)&= \frac{1}{n\omega_n}\int_{\R^n} \nMat{x-y}{n}^T\left(\norm{x-y}^{-n}\cdot\ncurl{n}a(y)\right)\,\mathrm{d}y,
\end{align}
\end{subequations}
and end up with \textit{Riesz potentials} of order $1$, see \cite[Section V.1]{Stein}.

\subsection{Robbin's proof of the div-curl lemma in higher dimensions}
In this last section, we show that the proof of the div-curl lemma presented in \cite{Robbin} in  three dimensions can be directly adopted to all dimensions using the matrix representation of the generalized cross product presented above. More precisely, we show
 
\begin{lemma}\label{lem:divcurlN}
  Let $n\ge2$, $\Omega\subseteq\R^n$ be open and the sequences of functions $u^k, v^k:\Omega\to\R^n$ satisfy
  \begin{subequations}
  \begin{align}\label{eq:uk}
   u^k\rightharpoonup u \quad &\text{ in } L^2(\Omega,\R^n) \text{ for } k\to\infty,\\
   \label{eq:vk}
   v^k\rightharpoonup v \quad &\text{ in } L^2(\Omega,\R^n)\text{ for } k\to\infty,
  \end{align}
  \end{subequations}
   and
  \begin{subequations}
  \begin{align}\label{eq:div1}
   \{\div u^k\}_{k\in\N} \quad&\text{compact set in } H^{-1}_{\text{loc}}(\Omega,\R^n),\\
   \label{eq:curl1}
   \{\ncurl{n} v^k\}_{k\in\N} \quad&\text{compact set in } H^{-1}_{\text{loc}}(\Omega,\R^{\frac{n(n-1)}{2}}).
  \end{align}
  \end{subequations}
  Then in the sense of distributions we have
\begin{align}
 \langle u^k,v^k\rangle&\rightarrow \skalarProd{u}{v} \quad\text{ for } k\to\infty,
 \intertext{i.e. it holds for all $\varphi\in C^\infty_c(\Omega,\R)$:}
 \int_{\Omega}\varphi\,\langle u^k,v^k\rangle\,\mathrm{d}x&\rightarrow \int_{\Omega}\varphi\,\skalarProd{u}{v}\,\mathrm{d}x\quad \text{ for } k\to\infty.\notag
\end{align}
\end{lemma}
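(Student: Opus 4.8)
The plan is to transport Robbin's three-dimensional argument almost verbatim, the only structural change being that the classical splitting $\Delta=\nabla\div-\curl\curl$ is replaced by its index-free $n$-dimensional analogue \eqref{eq:LaplacianN}, $\Delta b=\nabla\div b+\nMat{\nabla}{n}^T\ncurl{n}b$. First I would upgrade the hypotheses to strong convergences: since $u^k\rightharpoonup u$ and $v^k\rightharpoonup v$ in $L^2$ and the operators $\div$, $\ncurl{n}$ are bounded from $L^2$ into $H^{-1}_{\mathrm{loc}}$, we get $\div u^k\rightharpoonup\div u$ and $\ncurl{n}v^k\rightharpoonup\ncurl{n}v$ in $H^{-1}_{\mathrm{loc}}$; combined with the precompactness assumptions \eqref{eq:div1}--\eqref{eq:curl1}, a weakly convergent sequence sitting in a precompact set must converge strongly, so
\[
\div u^k\to\div u \quad\text{and}\quad \ncurl{n}v^k\to\ncurl{n}v \quad\text{strongly in } H^{-1}_{\mathrm{loc}}.
\]
These two strong convergences are the entire engine of the proof.

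Next, fix $\varphi\in C^\infty_c(\Omega)$, pick $\psi\in C^\infty_c(\Omega)$ with $\psi\equiv1$ near $\operatorname{supp}\varphi$, and let $b^k$ be the Newtonian potential $\mathcal N*(\psi v^k)$, so that $b^k$ is bounded in $H^2_{\mathrm{loc}}$ and $\Delta b^k=v^k$ on $\operatorname{supp}\varphi$. Applying \eqref{eq:LaplacianN} on $\operatorname{supp}\varphi$ and testing against $\varphi\,u^k$ yields
\[
\int_\Omega\varphi\,\skalarProd{u^k}{v^k}\,\mathrm{d}x=\int_\Omega\varphi\,\skalarProd{u^k}{\nabla\div b^k}\,\mathrm{d}x+\int_\Omega\varphi\,\skalarProd{u^k}{\nMat{\nabla}{n}^T\ncurl{n}b^k}\,\mathrm{d}x=:I_1^k+I_2^k.
\]
The term $I_2^k$ is the straightforward one: because $\ncurl{n}$ commutes with convolution, $\ncurl{n}b^k=\mathcal N*(\psi\,\ncurl{n}v^k)+\mathcal N*(\nabla\psi\ntimes{n}v^k)$; the first summand converges strongly in $H^1_{\mathrm{loc}}$ since $\mathcal N*$ gains two derivatives and $\psi\,\ncurl{n}v^k\to\psi\,\ncurl{n}v$ strongly in $H^{-1}$, while the commutator $\mathcal N*(\nabla\psi\ntimes{n}v^k)$ is bounded in $H^2_{\mathrm{loc}}$ and hence strongly convergent in $H^1_{\mathrm{loc}}$ by Rellich. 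Thus $\nMat{\nabla}{n}^T\ncurl{n}b^k\to\nMat{\nabla}{n}^T\ncurl{n}b$ strongly in $L^2_{\mathrm{loc}}$, and pairing this against the weak limit $u^k\rightharpoonup u$ gives $I_2^k\to\int_\Omega\varphi\,\skalarProd{u}{\nMat{\nabla}{n}^T\ncurl{n}b}\,\mathrm{d}x$.

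For $I_1^k$ I would integrate the gradient by parts, writing $I_1^k=-\int_\Omega\div b^k\,\big(\varphi\,\div u^k+\skalarProd{\nabla\varphi}{u^k}\big)\,\mathrm{d}x$. Here $\div b^k$ is bounded in $H^1_{\mathrm{loc}}$, so $\varphi\,\div b^k\rightharpoonup\varphi\,\div b$ weakly in $H^1$ while $\div u^k\to\div u$ strongly in $H^{-1}_{\mathrm{loc}}$, and the $H^{-1}$--$H^1$ duality pairing passes to the limit; in the remaining summand $\div b^k\to\div b$ strongly in $L^2_{\mathrm{loc}}$ by Rellich and $\skalarProd{\nabla\varphi}{u^k}\rightharpoonup\skalarProd{\nabla\varphi}{u}$ weakly, so the product converges as well. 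Reversing the integration by parts for the limit fields gives $I_1^k\to\int_\Omega\varphi\,\skalarProd{u}{\nabla\div b}\,\mathrm{d}x$. Adding the two limits and invoking \eqref{eq:LaplacianN} once more, $I_1^k+I_2^k\to\int_\Omega\varphi\,\skalarProd{u}{\Delta b}\,\mathrm{d}x=\int_\Omega\varphi\,\skalarProd{u}{v}\,\mathrm{d}x$; since the limit is independent of the subsequence extracted to build $b^k$, the full sequence converges.

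I expect the main obstacle to be the localization bookkeeping, concentrated in $I_2^k$: cutting $v^k$ off by $\psi$ produces the commutator $\ncurl{n}(\psi v^k)-\psi\,\ncurl{n}v^k=\nabla\psi\ntimes{n}v^k$, which is only bounded (not strongly convergent) in $L^2$, so the strong $H^1_{\mathrm{loc}}$ convergence of $\ncurl{n}b^k$ cannot be read off from the curl-compactness alone. The fix is exactly the one used above — the Newtonian potential gains two derivatives, turning the harmless commutator into an $H^2_{\mathrm{loc}}$-bounded, hence (by Rellich) strongly $H^1_{\mathrm{loc}}$-convergent, contribution. Everything else rests on two standard functional-analytic facts, namely that weak convergence into a precompact set is strong and that an $H^{-1}$--$H^1$ pairing is continuous when one factor converges strongly, so the genuinely new ingredient is purely the algebraic Laplacian splitting \eqref{eq:LaplacianN}.
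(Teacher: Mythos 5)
Your proposal is correct and rests on the same pillar as the paper's proof --- Robbin's compensated-compactness argument driven by the $n$-dimensional Laplacian splitting \eqref{eq:LaplacianN} --- but the execution is genuinely different. The paper proceeds symmetrically: it reduces to compactly supported fields (citing \cite{Robbin}), forms potentials for \emph{both} sequences, $w^k=\Delta^{-1}u^k$ and $\Delta^{-1}v^k$, splits $u^k=\nabla\psi^k+\nMat{\nabla}{n}^T\mathfrak{g}^k$ and $v^k=\nabla\phi^k+\nMat{\nabla}{n}^T\mathfrak{f}^k$, and transfers the compactness hypotheses onto the potentials through the commutation rules $\Delta^{-1}\div=\div\Delta^{-1}$ and $\Delta^{-1}\ncurl{n}=\ncurl{n}\Delta^{-1}$, which it derives from the algebraic identities \eqref{eq:KerDiv}, \eqref{eq:curlgrad} and \eqref{eq:curlcurlcurl}; the conclusion is then a sum of strong-times-weak pairings, the mixed term being tamed by \eqref{eq:KerDiv} and Rellich. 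You instead build a potential only for $v^k$ (the localized Newtonian potential $b^k=\mathcal{N}*(\psi v^k)$) and absorb the divergence hypothesis on $u^k$ by integration by parts, pairing $\div u^k$ (strong in $H^{-1}$) against $\div b^k$ (weak in $H^1$); this replaces the paper's second potential and its orthogonality step. Moreover, since you commute $\ncurl{n}$ with convolution rather than with an abstract $\Delta^{-1}$, you never need \eqref{eq:curlcurlcurl} at all --- only \eqref{eq:LaplacianN} and the product rules \eqref{eq:product}, with the cutoff commutator $\nabla\psi\ntimes{n}v^k$ handled by the two-derivative gain of $\mathcal{N}*$ plus Rellich. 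What the paper's route buys is a clean showcase of the index-free calculus it has developed (the point of the section) and an explicit Helmholtz picture for both sequences; what your route buys is self-contained localization and a shorter list of required identities. Two points in your write-up deserve explicit justification rather than a passing mention: (i) the mapping property that $\mathcal{N}*$ is bounded from compactly supported $H^{-1}$ data into $H^1_{\mathrm{loc}}$ (and $D^2\mathcal{N}*$ on compactly supported $L^2$) is true but not completely trivial, and for $n=2$ the logarithmic kernel forces you to state it as a purely local estimate with the uniform compact support of the data being essential; (ii) Rellich yields only subsequential strong limits, so your closing remark that all limits are subsequence-independent is not cosmetic but a necessary part of the argument --- keep it. Neither is a gap: this reliance on standard potential theory is at the same level of detail as the paper's own use of $\Delta^{-1}$.
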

It is not the most general formulation of the div-curl lemma, and we refer to \cite{BCDM2009,Tartar2009} and the references contained therein for both historical comments and generalizations. The main objective here is to demonstrate that the algebraic view advocated in the previous sections allows us to carry out the proof from \cite{Robbin} even in all dimensions without introducing the language of differential geometry. In three dimensions, Robbin's proof is based on the decomposition of the vector Laplacian \eqref{eq:Laplacian3}. In the previous section, we have obtained the desired decomposition in all dimensions, see \eqref{eq:LaplacianN}.

Furthermore, multiplying \eqref{eq:RoomNbb} with $\nMat{b}{n}$ from the left we deduce for all $b\in\R^n$:
\begin{equation}
 \nMat{b}{n}\nMat{b}{n}^T\nMat{b}{n}=\norm{b}^2\cdot\nMat{b}{n}-(\nMat{b}{n}b)\otimes b = \norm{b}^2\cdot\nMat{b}{n}-(\underset{=0}{\underbrace{b\ntimes{n}b}})\otimes b =\norm{b}^2\cdot\nMat{b}{n}
\end{equation}
so that in the language of vector calculus it becomes:
\begin{equation}\label{eq:curlcurlcurl}
 \ncurl{n}\nMat{\nabla}{n}^T\ncurl{n} a = \Delta \ncurl{n} a \quad \text{ for }a\in C^\infty_c(\Omega,\R^n).
\end{equation}
Now we have prepared all the relations between differential operators that we need  to follow Robbin's proof.
\begin{proof}[Proof of Lemma \ref{lem:divcurlN}]
  It suffices to assume that the functions $u^k$ and $v^k$ have compact support, see \cite{Robbin} and the corresponding relations \eqref{eq:product}. Let us extend $u^k$ by zero to the entire space $\R^n$ and denote by $w^k$ the unique solution in $L^2(\Omega,\R^n)$ of
  \begin{equation}\label{eq:solLap_k}
   \Delta w^k=u^k.
  \end{equation}
Thus, we write
\begin{equation}
 w^k = \Delta^{-1}u^k
\end{equation}
and set
\begin{equation}
 \psi^k = \div w^k \quad\text{and}\quad \mathfrak{g}^k=\ncurl{n}w^k,
\end{equation}
so that
\begin{equation}\label{eq:psig}
 \nabla \psi^k+\nMat{\nabla}{n}^T\mathfrak{g}^k= \nabla\div w^k+\nMat{\nabla}{n}^T\ncurl{n}w^k\overset{\eqref{eq:LaplacianN}}{=}\Delta w^k \overset{\eqref{eq:solLap_k}}{=}u^k.
\end{equation}
Next, we show that $\Delta^{-1}$ commutes with the $\div$ and $\ncurl{n}$ operators in all dimensions $n\ge2$:
\begin{align}
 \Delta^{-1}\div f & = \Delta^{-1}\div\Delta\Delta^{-1}f \overset{\eqref{eq:LaplacianN}}{=} \Delta^{-1}\div(\nabla \div +\nMat{\nabla}{n}^T\ncurl{n})\Delta^{-1}f\notag\\
 &\overset{\eqref{eq:KerDiv}}{=}\Delta^{-1}\div\nabla \div \Delta^{-1}f= \Delta^{-1}\Delta \div \Delta^{-1}f = \div \Delta^{-1}f
\end{align}
as well as
\begin{align}
 \Delta^{-1}\ncurl{n}f & = \Delta^{-1}\ncurl{n}\Delta\Delta^{-1}f \overset{\eqref{eq:LaplacianN}}{=} \Delta^{-1}\ncurl{n}(\nabla \div +\nMat{\nabla}{n}^T\ncurl{n})\Delta^{-1}f\notag\\
 &\overset{\eqref{eq:curlgrad}}{=}\Delta^{-1}\ncurl{n}\nMat{\nabla}{n}^T\ncurl{n}\Delta^{-1}f\overset{\eqref{eq:curlcurlcurl}}{=} \Delta^{-1}\Delta \ncurl{n} \Delta^{-1}f =\ncurl{n} \Delta^{-1}f.
\end{align}
And we can conclude as in \cite{Robbin}:
\begin{align}
 \nabla \psi^k =\nabla\div w^k=\nabla\div\Delta^{-1}u^k=\nabla\Delta^{-1}\div u^k \to \nabla \psi \text{ in } L^2(\Omega,\R^n)
\end{align}
at least for a subsequence by \eqref{eq:div1}. Moreover,
\begin{align}
 \nMat{\nabla}{n}^T\underset{\eqqcolon \mathfrak{f}^k}{\underbrace{\ncurl{n}\Delta^{-1}v^k}}=\nMat{\nabla}{n}^T\Delta^{-1}\ncurl{n}v^k \to \nMat{\nabla}{n}^T \mathfrak{f} \text{ in } L^2(\Omega,\R^n)
\end{align}
at least for a subsequence by \eqref{eq:curl1}. Furthermore:
\begin{align}
 \mathfrak{g}^k 
 \to \mathfrak{g}  \text{ in } L^2(\Omega,\R^{\frac{n(n-1)}{2}}), 
\quad \nMat{\nabla}{n}^T\mathfrak{g}^k
\rightharpoonup\nMat{\nabla}{n}^T\mathfrak{g}\text{ in } L^2(\Omega,\R^n)
\end{align}
and for $\phi^k\coloneqq\div\Delta^{-1}v^k$:
\begin{align} 
 \phi^k\to\phi \text{ in } L^2(\Omega,\R), \quad \nabla \phi^k\rightharpoonup\nabla \phi \text{ in } L^2(\Omega,\R^n).
\end{align}
With the above results the proof completes as in \cite{Robbin}.
 \end{proof}

\section{Conclusion}
In the present paper, we have studied the algebraic structures underlying the generalized cross product, by relating it to an adequate matrix multiplication. The situation differs from that of the usual cross product in three dimensions, where a matrix representation results in a skew-symmetric matrix. The lack of symmetry in the general case leads to the fact that the known algebraic identities have to be adapted in an appropriate way and that also other combinations must be included. In vector calculus, this led not only to the generalized $\ncurl{n}$, but also to a new operator $\nMat{\nabla}{n}^T$. The importance of the latter has been highlighted in the previous sections, in particular by the fact that the image of the $\nMat{\nabla}{n}^T$ operator lies in the kernel of the divergence operator, see \eqref{eq:KerDiv}. Here we have thoroughly examined the matrix analysis behind such operations. Such a view has already proved very useful in extending Korn inequalities for
incompatible tensor fields to higher dimensions, cf.~\cite{agn_lewintan2020KornLpN_tracefree}, where first results in these matrix representations
have been obtained. With the better understanding presented here, we are now in a position to further extend Korn-Maxwell-Sobolev type inequalities. This will be the subject of a forthcoming paper.

\subsection*{Acknowledgment}
The author was supported by the Deutsche Forschungsgemeinschaft (Project-ID 415894848) and thanks the anonymous referees for their valuable comments and suggestions.

\footnotesize{
\printbibliography}

\end{document}